\renewcommand{\c}[1]{\overline{#1}}
\newcommand{\D}{\mathbb{D}}
\newcommand{\T}{\mathbb{T}}
\newcommand{\C}{\mathbb{C}}
\renewcommand{\Re}{\mathrm{Re}\,}
\newcommand{\upto}{\nearrow}
\renewcommand{\epsilon}{\varepsilon}
\renewcommand{\phi}{\varphi}
\newcommand{\Lloc}{L_{loc}^1}
\newcommand{\dd}{\partial}
\newcommand{\dbar}{\c{\dd}}
\renewcommand{\and}{\quad\text{and}\quad}
\newcommand{\ball}[2]{D\left(#1,#2\right)}
\newcommand{\action}[2]{\left\langle #1, #2 \right\rangle}
\DeclareMathOperator{\supp}{supp}
\let\originalleft\left
\let\originalright\right
\renewcommand{\left}{\mathopen{}\mathclose\bgroup\originalleft}
\renewcommand{\right}{\aftergroup\egroup\originalright}
\newtheorem{theorem}{Theorem}
\newtheorem{lemma}{Lemma}
\newtheorem{proposition}{Proposition}
\theoremstyle{definition}
\newtheorem*{definition}{Definition}
\newtheorem*{remark}{Remark}
\title[Solving Poisson's equation for the standard weighted Laplacian]{Solving Poisson's equation for the standard weighted Laplacian in the unit disc}
\date{\today}
\author{Gustav Behm}
\address{Department of Mathematics, KTH Royal Institute of Technology, 100 44 Stockholm, Sweden}
\email{gbehm@math.kth.se}
\keywords{Green's function, standard weighted Laplace operator, Poisson's equation}
\subjclass[2010]{Primary 35J25; Secondary 35J08}
\begin{document}

\begin{abstract}
We will find Green's function for the standard weighted Laplacian and use the corresponding Green's potential to solve Poisson's equation in the unit disc  with zero boundary values, in the sense of radial $L^1$-means, for complex Borel measures $\mu$ satisfying the condition
$$\int_\D (1-|w|^2)^{\alpha+1} d|\mu|(w) < \infty, \quad \alpha >-1.$$
\end{abstract}

\maketitle

\section{Introduction}
The standard weighted (negative) Laplacian we are going to study is defined as
$$L_\alpha=-\dbar_z (1-|z|^2)^{-\alpha}\dd_z, \quad \alpha>-1.$$
Here $\dd_z$ and $\dbar_z$ denote the two Wirtinger derivatives:
$$\dd_z = \frac{1}{2}\left(\frac{\dd}{\dd x}-i\frac{\dd}{\dd y}\right) \quad\text{and}\quad \dbar_z = \frac{1}{2}\left(\frac{\dd}{\dd x}+i\frac{\dd}{\dd y}\right), \quad z=x+iy\in \C,$$
where we understand the derivatives in the distributional sense when necessary.
The operator $\dbar_z$ is sometimes referred to as the Cauchy-Riemann operator since the equation $\dbar_z u = 0$ is equivalent to the Cauchy-Riemann equation.

This type of weighted Laplacian $\dbar_z \rho^{-1} \dd_z$ was studied for weights $\rho$ which are continuously differentiable in a neighborhood of its domain by Garabedian in his paper \cite{garabedian}.

In our case the weight will not be continuously differentiable on the boundary of the unit disc and we need to be more careful close to the boundary.
This weight occurs often in the study of weighted Bergman spaces in the unit disc, see \cite{h-book}.

We will consider our weighted Laplacian as an operator on the space of distributions on $\D$.
If we define the adjoint of the operator $L_\alpha$ as $\c{L_\alpha}=-\dd_z (1-|z|^2)^{-\alpha}\dbar_z$ we get explicitly
\begin{equation}
\label{eq:L-def}
\action{L_\alpha u}{\phi}=\action{u}{\c{L_\alpha} \phi}, \quad \phi\in C_0^\infty(\D),
\end{equation}
since $(1-|z|^2)^{-\alpha}$ is smooth inside $\D$ so that the multiplication is well-defined.

The aim of this paper is to solve Poisson's equation for $L_\alpha$ in the unit disc $\D$:
\begin{enumerate}
\item \text{$L_\alpha u = \mu$ in sense of distributions on $\D$ and}
\item \text{$u(re^{i\cdot}) \to 0$ in $L^1(\T)$ when $r\upto 1$}
\end{enumerate}
for any complex Borel measure $\mu$ satisfying
$$\int_\D (1-|w|^2)^{\alpha+1} d|\mu|(w) < \infty.$$


Our approach to solving Poisson's equation is to find Green's function for $L_\alpha$ and verifying that the corresponding Green's potential solves the equation.
The uniqueness of the solution is then implied by the uniqueness of the Dirichlet problem for the same operator which has been studied in \cite{ow} by Olofsson and Wittsten where they found the Poisson kernel for $L_\alpha$ in the unit disc.

\section{Notation and prerequisites}

We adopt the convention that the constant $C$ appearing in the equations below is not necessarily the same at every instance and depends only on its indices, e.g.~$C_\gamma$ depends only on $\gamma$.

The parameter $\alpha$ will appear frequently and we will throughout this text assume that $\alpha>-1$, even when this is not stated explicitly.

Let $\ball{z}{R}$ be the open disc in the complex plane centered at $z$ with radius $R$.
Let $\D=\ball{0}{1}$ be the unit disc and $\T=\dd\D$ the unit circle.
The area measure $dA$ will be normalized as $dA=\frac{1}{\pi}dxdy$ and the arc length measure $ds$ along $r(t)$ defined and normalized as $ds=\frac{1}{2\pi}|r'(t)|dt$.

In this setting the classical or unweighted case $\alpha=0$ becomes $L_0=-\dbar\dd=-\frac{1}{4}\Delta$ and the corresponding classical Green's function is
$$G(z,w)=-\log\left|\frac{z-w}{1-\c{z}w}\right|^2.$$

Let $\delta_0$ denote the Dirac delta distribution with support only at $0$.
Furthermore we will consider the expression $\delta_0(z-\zeta)$ as the Dirac delta distribution with respect to $z$ with support only at $\zeta$, and similarly if we interchange the roles of $z$ and $\zeta$.

We will abuse notation and identify a Radon measure $\mu$ on $\D$ with the distribution on $\D$ with the same name by its usual duality property:
\begin{equation}
\label{eq:radon-action}
\action{\mu}{\phi} = \int_\D \phi d\mu, \quad \phi\in C_0^\infty(\D)
\end{equation}
Note that any locally finite Borel measure on $\D$ is a Radon measure (see \cite{rudin} Theorem 2.17 for inner regularity).

Similarly we will also identify functions $u\in\Lloc(\D)$ with distributions on $\D$ with the same name having the action
$$\action{u}{\phi}=\int_\D u \phi dA, \quad \phi\in C_0^\infty(\D).$$
Hence we can speak of the Wirtinger derivatives of functions in $u\in\Lloc(\D)$.
Explicitly we mean that the distribution $\dd_z u$ have the action
$$\action{\dd_z u}{\phi}=-\int_\D u \dd_z\phi dA, \quad \phi\in C_0^\infty(\D)$$
and similarly for the $\dbar_z$-derivative.

\section{Green's function}

We seek Green's function for $L_\alpha=-\dbar_z|z|^{-2\alpha}\dd_z$ in $\D$, meaning that we seek the unique function $G_\alpha(z,w)$ satisfying the following definition.
\begin{definition}
Green's function $G_\alpha(z,w)$ is the fundamental solution of $L_\alpha$ with zero boundary values.
That is, $G_\rho(z,w)$ is the function defined on $\D\times\D$ solving for fixed $w\in\D$:
\begin{enumerate}
\item $L_\alpha G_\alpha(z,w)=\delta_0(z-w)$,
\item $G_\alpha(z,w)\to 0$ when $z\to\zeta\in\dd\D$,
\item $G_\alpha(z,w)=\c{G_\alpha(w,z)}$.
\end{enumerate}
\end{definition}

We will not explicitly prove the the symmetry condition $G_\alpha(z,w)=\c{G_\alpha(w,z)}$ as it will be quite clear that it is satisfied just by inspection of the soon to be proposed expression.

The representation for Green's function and its proof involves complex exponentiation of the type $z^\gamma$ for real $\gamma$ where $z$ will lie in the right half-plane $\Re z > 0$.
Hence we restrict ourselves to the principal branch, that is, the branch for which $z^\gamma$ is real for real $z$.

\begin{theorem}
For the principal branch of the complex exponential Green's function $G_\alpha(z,w)$ for the operator $L_\alpha=-\dbar_z (1-|z|^2)^{-\alpha}\dd_z$ for $\alpha>-1$ in $\D$ is given by
$$G_\alpha(z,w)=(1-\c{z}w)^\alpha h\circ g(z,w), \quad z\neq w,$$
where
$$h(s)=\int_0^s\frac{t^\alpha}{1-t} dt=\sum_{n=0}^\infty \frac{s^{\alpha+1+n}}{\alpha+1+n}, \quad 0\leq s < 1,$$
and
$$g(z,w)=1-\left|\frac{z-w}{1-\c{z}w}\right|^2=\frac{(1-|z|^2)(1-|w|^2)}{|1-z\c{w}|^2}.$$
\end{theorem}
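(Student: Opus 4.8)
The plan is to verify the three defining properties of $G_\alpha$ directly from the proposed formula. The symmetry property (3) is visible by inspection once one notes that $g(z,w)$ is real and symmetric, $g(z,w)=g(w,z)$, while $\overline{(1-\c z w)^\alpha}=(1-z\c w)^\alpha=(1-\c w z)^\alpha$, so I will not dwell on it. Property (2) is also quick: for fixed $w\in\D$ the factor $(1-\c z w)^\alpha$ stays bounded as $z\to\zeta\in\T$, while $g(z,w)\to 0$ because of the factor $(1-|z|^2)$ in its numerator; since $h$ is continuous with $h(0)=0$ (indeed $h(s)\sim s^{\alpha+1}/(\alpha+1)$ as $s\to 0^+$), we get $G_\alpha(z,w)\to 0$. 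The substance of the theorem is therefore the distributional identity (1), $L_\alpha G_\alpha(\cdot,w)=\delta_0(\cdot-w)$, and this is where I would concentrate the effort.

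For (1) I would first compute $\dd_z G_\alpha$ pointwise for $z\neq w$. Writing $g=g(z,w)$ and using that $1-\c z w$ is annihilated by $\dd_z$, the product rule gives $\dd_z G_\alpha=(1-\c z w)^\alpha h'(g)\,\dd_z g$, where $h'(s)=s^\alpha/(1-s)$ by the fundamental theorem of calculus. The key algebraic simplifications are $1-g=|z-w|^2/|1-\c z w|^2$, which turns $h'(g)$ into $g^\alpha|1-\c z w|^2/|z-w|^2$, together with the identity $(\c w-\c z)/|z-w|^2=1/(w-z)$ and a short computation of $\dd_z g$. After cancelling the powers of $1-\c z w$ against those hidden in $g^\alpha=\bigl((1-|z|^2)(1-|w|^2)\bigr)^\alpha/|1-\c z w|^{2\alpha}$, everything collapses to
\[
\dd_z G_\alpha=\frac{(1-|z|^2)^\alpha(1-|w|^2)^{\alpha+1}}{(1-z\c w)^{\alpha+1}(w-z)}.
\]
In particular $\dd_z G_\alpha$ is holomorphic in $z$ away from $w$, confirming $L_\alpha G_\alpha=0$ there, and it is locally integrable near $w$.

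Multiplying by the smooth interior weight removes the $(1-|z|^2)^\alpha$ entirely, leaving the clean expression $(1-|z|^2)^{-\alpha}\dd_z G_\alpha=(1-|w|^2)^{\alpha+1}(1-z\c w)^{-(\alpha+1)}/(w-z)$. Now I would apply $-\dbar_z$ in the sense of distributions. Since $(1-z\c w)^{-(\alpha+1)}$ is holomorphic in $z$ throughout $\D$ and $1/(w-z)$ is locally integrable, the product rule for $\dbar_z$ leaves only the contribution of $\dbar_z\bigl(1/(w-z)\bigr)$. With the normalization $dA=\frac1\pi\,dx\,dy$ one has $\dbar_z\bigl(1/(w-z)\bigr)=-\delta_0(z-w)$ (the fundamental solution of $\dbar$), and multiplying this delta by the continuous factor $(1-z\c w)^{-(\alpha+1)}$ evaluates it at $z=w$, producing $(1-|w|^2)^{-(\alpha+1)}$. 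This exactly cancels the prefactor $(1-|w|^2)^{\alpha+1}$, yielding $L_\alpha G_\alpha(\cdot,w)=\delta_0(\cdot-w)$.

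The one point deserving genuine care---and the main obstacle---is the passage from the pointwise to the distributional $\dd_z G_\alpha$: I must rule out a spurious delta at $z=w$ when differentiating $G_\alpha$ the first time. For this I would record that $h(g)\sim-\log(1-g)\sim-\log|z-w|^2$ as $z\to w$, so $G_\alpha(\cdot,w)$ has only a logarithmic singularity and is locally integrable, with pointwise gradient of size $O(1/|z-w|)$, hence also locally integrable. Excising a disc $\ball{w}{\epsilon}$, integrating by parts on $\D\setminus\ball{w}{\epsilon}$ and letting $\epsilon\to 0$, the boundary term over $\dd\ball{w}{\epsilon}$ is $O(\epsilon\log\epsilon)\to 0$, so the distributional and pointwise first derivatives agree. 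Only the second differentiation, by $\dbar_z$, then produces a genuine delta, and the computation above shows its coefficient is precisely $1$.
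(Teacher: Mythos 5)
Your proposal is correct and follows essentially the same route as the paper: compute $\dd_z G_\alpha$ pointwise for $z\neq w$ to get $\frac{(1-|w|^2)^{\alpha+1}}{(1-z\c{w})^{\alpha+1}}\frac{(1-|z|^2)^\alpha}{w-z}$, identify this with the distributional derivative by excising a disc around $w$ and killing the $O(\epsilon\log\epsilon)$ boundary term, then apply $\dbar_z$ after cancelling the weight, using holomorphy of $(1-z\c{w})^{-(\alpha+1)}$ and $\dbar_z\frac{1}{z-w}=\delta_0(z-w)$. The only (harmless) deviation is your boundary-limit argument, which uses continuity of $h$ at $0$ together with $g(z,w)\to 0$ directly, rather than the paper's quantitative estimate on $G_\alpha$ (which the paper needs later for the Poisson equation anyway).
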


Observe that since $|\c{z}w|<|w|<1$ for $z,w\in\D$, the expression $1-\c{z}w$ lies strictly in the right half-plane and is bounded away from both zero and infinity.

Since we already have uniqueness of Dirichlet's problem (see \cite{ow}) the theorem is proved by verifying the definition of Green's function.
The boundary condition follows quite easily from the estimates in Section~\ref{s:estimates} below and the condition $L_\alpha G_\alpha(z,w)=\delta_0(z-w)$ is verified in Proposition~\ref{p:LG} in Section~\ref{s:LG}

\begin{remark}
The proposed expression for Green's function can be expressed using the incomplete Beta function $B(x;a,b)=\int_0^x t^{a-1}(1-t)^{b-1} dt$ as
$$G_\alpha(z,w)=(1-\c{z}w)^\alpha B\left(\frac{(1-|z|^2)(1-|w|^2)}{|1-z\c{w}|^2}; \alpha+1,0\right)$$
and therefore also using the zero-balanced Gauss' hypergeometric function ${}_2F_1(1,\alpha+1;\alpha+2;z)$ as
$$G_\alpha(z,w)=\frac{1}{\alpha+1}\frac{(1-|z|^2)^{\alpha+1}(1-|w|^2)^{\alpha+1}}{(1-\c{z}w)(1-z\c{w})^{\alpha+1}} {}_2F_1\left(1,\alpha+1;\alpha+2;\frac{(1-|z|^2)(1-|w|^2)}{|1-z\c{w}|^2}\right).$$
For definitions and formulas for $B$ and ${}_2F_1$, as well as their relation to each other, we refer the reader to \cite{ab}.
\end{remark}

\subsection{A closer look at $h(s)$ and $g(z,w)$}
\label{s:estimates}
We start with looking at the behavior of $g(z,w)$.
If we fix $w\in\D$ then we can consider the M\"{o}bius transformation
$$\phi_w(z)=\frac{z-w}{1-z\c{w}}$$
which maps $\c{\D}$ onto $\c{\D}$ and in particular maps $\T$ onto $\T$.
Then
\begin{equation}
\label{eq:g-phi}
g(z,w)=1-|\phi_w(z)|^2.
\end{equation}
Using this we can show the following properties of $g(z,w)$:
\begin{lemma}
The function $g(z,w)$ satisfies:
\begin{enumerate}
\item $0\leq g(z,w) \leq 1$,
\item $g(z,w)=1$ if and only if $z=w$ and
\item for $w\in\D$: $g(z,w)=0$ if and only if $z\in\T$.
\end{enumerate}
\end{lemma}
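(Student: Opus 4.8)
The plan is to deduce all three properties directly from the identity \eqref{eq:g-phi}, $g(z,w)=1-|\phi_w(z)|^2$, together with the already-noted fact that for fixed $w\in\D$ the M\"obius map $\phi_w$ is an automorphism of $\c{\D}$ that carries $\T$ bijectively onto $\T$ (and hence $\D$ onto $\D$). Since $z$ ranges over $\c{\D}$ in the statement, each claim then becomes an elementary assertion about the modulus $|\phi_w(z)|$.

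For (1), since $\phi_w$ maps $\c{\D}$ into $\c{\D}$ we have $|\phi_w(z)|\leq 1$, whence $g(z,w)=1-|\phi_w(z)|^2\geq 0$; the upper bound $g(z,w)\leq 1$ is immediate from $|\phi_w(z)|^2\geq 0$. For (2), I would observe that $g(z,w)=1$ is equivalent to $\phi_w(z)=0$, and since $\phi_w$ is injective with $\phi_w(w)=0$ this holds exactly when $z=w$.

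For (3) I fix $w\in\D$ and note that $g(z,w)=0$ is equivalent to $|\phi_w(z)|=1$, i.e.\ $\phi_w(z)\in\T$; because $\phi_w$ maps $\T$ onto $\T$ and $\D$ onto $\D$, its only preimages of $\T$ inside $\c{\D}$ are the points of $\T$, giving $g(z,w)=0\iff z\in\T$. The only step requiring any care—and the closest thing to an obstacle—is the automorphism property of $\phi_w$ itself, in particular the observation that its pole $1/\c{w}$ satisfies $|1/\c{w}|=1/|w|>1$ and hence lies outside $\c{\D}$ for $w\in\D$, so that $\phi_w$ is holomorphic on a neighbourhood of $\c{\D}$ and its restriction to $\T$ is a genuine bijection. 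This is precisely the reason the hypothesis $w\in\D$ is needed in (3), and since the paper has already asserted these mapping properties I may invoke them without further comment.
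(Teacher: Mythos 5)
Your proof is correct and follows essentially the same route as the paper: both arguments reduce all three claims to the mapping properties of the M\"obius automorphism $\phi_w$ via the identity $g(z,w)=1-|\phi_w(z)|^2$. Your version merely spells out the details (injectivity for (2), the preimage argument and the location of the pole $1/\c{w}$ for (3)) that the paper leaves implicit.
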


\begin{proof}
Since $\phi_w$ maps $\c{\D}$ into $\c{\D}$ we have $0\leq |\phi_w(z)| \leq 1$.
Now $g(z,w)=1$ if and only if $\phi_w(z)=0$, but since $\phi_w$ is a bijection and maps $w$ to the origin then $\phi_w(z)=0$ if and only if $z=w$.
If we reason similarly for $z\in\T$ we can deduce that $g(z,w)$ is zero if and only if $z\in\T$.
\end{proof}

Turning our attention to $h(s)$ note that it is chosen to satisfy
\begin{equation}
\label{eq:h-derivative}
h'(s)=\frac{s^\alpha}{1-s}.
\end{equation}
Hence we can think of $h$ as a generalized logarithm in some sense and in particular we have $h(s)=-\log(1-s)$ when $\alpha=0$.

\begin{lemma}
\label{l:h-est}
The function $h(s)$ satisfies the estimate
$$h(s) \leq C_\alpha s^{\alpha+1}\left(1 - \log(1-s)\right)$$
where $C_\alpha$ is a constant depending only on $\alpha>-1$.
\end{lemma}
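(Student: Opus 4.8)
The plan is to work directly from the power-series representation of $h$ already recorded in the statement, namely
$$h(s)=\sum_{n=0}^\infty \frac{s^{\alpha+1+n}}{\alpha+1+n}=s^{\alpha+1}\sum_{n=0}^\infty\frac{s^n}{\alpha+1+n},\quad 0\leq s<1,$$
which converges since $\alpha+1>0$ and $0\leq s<1$. After factoring out $s^{\alpha+1}$, the claim reduces to showing that the remaining series $F(s):=\sum_{n=0}^\infty s^n/(\alpha+1+n)$ is dominated by $C_\alpha(1-\log(1-s))$.

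The key comparison is with the familiar expansion $-\log(1-s)=\sum_{n=1}^\infty s^n/n$, valid for $0\leq s<1$. I would split off the $n=0$ term of $F$, which equals the constant $1/(\alpha+1)$ and has no counterpart in the logarithm series; this is precisely what produces the additive constant in the factor $1-\log(1-s)$. For the tail $n\geq 1$ the decisive observation is that $\alpha+1>0$ forces $\alpha+1+n>n$, hence $\frac{1}{\alpha+1+n}\leq\frac1n$ for every $n\geq 1$. Since all terms are nonnegative for $s\in[0,1)$, summing term by term gives
$$F(s)\leq\frac{1}{\alpha+1}+\sum_{n=1}^\infty\frac{s^n}{n}=\frac{1}{\alpha+1}-\log(1-s).$$
To finish, I would note that $1-\log(1-s)\geq 1$ on $[0,1)$ because $-\log(1-s)\geq 0$ there, so taking $C_\alpha=\max\{1,1/(\alpha+1)\}$ yields $\frac{1}{\alpha+1}-\log(1-s)\leq C_\alpha(1-\log(1-s))$. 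Multiplying back by $s^{\alpha+1}$ then gives the stated estimate, with $C_\alpha$ depending only on $\alpha>-1$ as required.

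There is no real obstacle here; the heart of the argument is a two-line term-by-term comparison. The only points demanding care are bookkeeping ones: separating the $n=0$ term (which is what the ``$1$'' in $1-\log(1-s)$ accounts for) and checking that the inequality $1/(\alpha+1+n)\leq 1/n$ uses precisely the standing hypothesis $\alpha>-1$. As an alternative not relying on the series, one could substitute $t=su$ to write $h(s)=s^{\alpha+1}\int_0^1 u^\alpha(1-su)^{-1}\,du$ and then split the $u$-integral at $1/2$: on $[0,1/2]$ the factor $(1-su)^{-1}$ is bounded by $2$, while on $[1/2,1]$ the factor $u^\alpha$ is bounded, the resulting $\int_{1/2}^1(1-su)^{-1}\,du$ producing the logarithmic term. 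This route, however, needs an extra case distinction on the size of $s$ to control the factor $s^{-1}$, so I would prefer the series argument for its brevity.
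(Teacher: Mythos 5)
Your proof is correct and follows essentially the same route as the paper's: isolate the $n=0$ term of the series (the source of the additive ``$1$''), bound the tail termwise via $1/(\alpha+1+n)\leq 1/n$ for $n\geq 1$, recognize the logarithm series, and absorb everything into a constant depending only on $\alpha$. Your only addition is making the final constant explicit, $C_\alpha=\max\{1,1/(\alpha+1)\}$, where the paper simply says ``choose $C_\alpha$ appropriately.''
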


\begin{proof}
If we isolate the first term in the series representation and estimate using the fact that $\alpha+1>0$ we get
\begin{equation*}
h(s)=\frac{s^{\alpha+1}}{\alpha+1} + \sum_{n=1}^\infty \frac{s^{\alpha+1+n}}{\alpha+1+n} \leq \frac{s^{\alpha+1}}{\alpha+1} + s^{\alpha+1} \sum_{n=1}^\infty \frac{s^n}{n} =  \frac{s^{\alpha+1}}{\alpha+1} - s^{\alpha+1}\log(1-s).
\end{equation*}
If we choose $C_\alpha$ appropriately we get the desired estimate.
\end{proof}

From this estimate we can derive the following fundamental estimate for $G_\alpha(z,w)$.

\begin{lemma}
\label{l:Gest}
The function $G_\alpha(z,w)$ satisfy the estimate
$$|G_\alpha(z,w)| \leq C_\alpha \left( \frac{(1-|w|^2)^{\alpha+1}(1-|z|^2)^{\alpha+1}}{|1-\c{z}w|^{\alpha+2}} + (1-|w|^2)^\alpha G(z,w)\right).$$
\end{lemma}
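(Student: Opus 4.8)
The plan is to substitute the explicit formula $G_\alpha(z,w)=(1-\c{z}w)^\alpha h\circ g(z,w)$ into the modulus and reduce everything to Lemma~\ref{l:h-est}. First I would observe that on the principal branch $|(1-\c{z}w)^\alpha|=|1-\c{z}w|^\alpha$, since $\alpha$ is real and $1-\c{z}w$ lies in the right half-plane, and that $h(s)\geq 0$ for $0\leq s<1$ because its integrand is nonnegative; hence
$$|G_\alpha(z,w)|=|1-\c{z}w|^\alpha\, h\circ g(z,w).$$
Applying Lemma~\ref{l:h-est} with $s=g(z,w)$ then bounds $h\circ g$ by $C_\alpha\, g(z,w)^{\alpha+1}\bigl(1-\log(1-g(z,w))\bigr)$.

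The key algebraic simplification is the identity $1-g(z,w)=\left|\frac{z-w}{1-\c{z}w}\right|^2$, so that $-\log(1-g(z,w))=G(z,w)$, together with $g(z,w)^{\alpha+1}=\frac{(1-|z|^2)^{\alpha+1}(1-|w|^2)^{\alpha+1}}{|1-z\c{w}|^{2(\alpha+1)}}$. Combining these with $|1-\c{z}w|=|1-z\c{w}|$ collapses the power of $|1-z\c{w}|$ from $\alpha-2(\alpha+1)$ to $-(\alpha+2)$, giving
$$|G_\alpha(z,w)|\leq C_\alpha\,\frac{(1-|z|^2)^{\alpha+1}(1-|w|^2)^{\alpha+1}}{|1-z\c{w}|^{\alpha+2}}\bigl(1+G(z,w)\bigr).$$

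Finally I would split the factor $1+G(z,w)$ into $1$ and $G(z,w)$. The $1$-part already reproduces the first term claimed in the lemma. For the $G$-part it remains to absorb the prefactor into $(1-|w|^2)^\alpha$, that is, to prove
$$\frac{(1-|z|^2)^{\alpha+1}(1-|w|^2)}{|1-z\c{w}|^{\alpha+2}}\leq C_\alpha.$$
This is the only genuine inequality and the step I expect to require the most care: it follows from the elementary bounds $1-|z|^2\leq 2|1-z\c{w}|$ and $1-|w|^2\leq 2|1-z\c{w}|$, which come from $|1-z\c{w}|\geq 1-|z||w|\geq\max(1-|z|,1-|w|)$ together with $1-t^2\leq 2(1-t)$ for $0\leq t\leq1$. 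Raising these to the powers $\alpha+1>0$ and $1$ respectively bounds the numerator by $2^{\alpha+2}|1-z\c{w}|^{\alpha+2}$. Substituting back yields the second term $C_\alpha(1-|w|^2)^\alpha G(z,w)$ and completes the proof.
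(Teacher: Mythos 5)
Your proof is correct and follows essentially the same route as the paper: apply Lemma~\ref{l:h-est} to $h\circ g$, identify $-\log(1-g(z,w))$ with $G(z,w)$, and bound the prefactor $\frac{(1-|z|^2)^{\alpha+1}(1-|w|^2)}{|1-z\c{w}|^{\alpha+2}}$ by a constant via the reverse triangle inequality $|1-z\c{w}|\geq 1-|z||w|$. The paper phrases that last step as boundedness of the ratios $\frac{1-|z|^2}{|1-\c{z}w|}$ and $\frac{1-|w|^2}{|1-\c{z}w|}$, which is the same elementary estimate you use.
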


\begin{proof}
If we apply the previous lemma to $h\circ g(z,w)$ we get
\begin{align}
h\circ g(z,w) & \leq C_\alpha g(z,w)^{\alpha+1}(1 - \log(1-g(z,w))) = C_\alpha g(z,w)^{\alpha+1}\left(1 - \log\left|\frac{z-w}{1-\c{z}w}\right|^2 \right) \nonumber\\
& = C_\alpha g(z,w)^{\alpha+1}(1 + G(z,w)) \label{eq:hg-est}.
\end{align}
Using this estimate we get:
$$|G_\alpha(z,w)| \leq C_\alpha|1-\c{z}w|^\alpha g(z,w)^{\alpha+1} (1 + G(z,w))$$
where explicitly
$$|1-\c{z}w|^\alpha g(z,w)^{\alpha+1} = \frac{(1-|z|^2)^{\alpha+1}(1-|w|^2)^{\alpha+1}}{|1-\c{z}w|^{\alpha+2}}.$$
In particular we can factor out $(1-|w|^2)^\alpha$ so that the remaining factor can be written as
$$\frac{(1-|z|^2)^{\alpha+1}(1-|w|^2)}{|1-\c{z}w|^{\alpha+2}} = \frac{(1-|z|^2)^{\alpha+1}}{|1-\c{z}w|^{\alpha+1}} \frac{1-|w|^2}{|1-\c{z}w|}.$$
This expression is bounded since both the factors $\frac{1-|z|^2}{|1-\c{z}w|}$ and $\frac{1-|w|^2}{|1-\c{z}w|}$ can be bounded by using the reverse triangle inequality:
\begin{equation}
\label{eq:reverse}
\frac{1-|z|^2}{|1-\c{z}w|}\leq \frac{1-|z|^2}{1-|z||w|} \leq \frac{(1-|z|)(1+|z|)}{1-|z|} = 1 + |z| < 2
\end{equation}
and similarly for $\frac{1-|w|^2}{|1-\c{z}w|}$.
So if we move $|1-\c{z}w|^\alpha g(z,w)^{\alpha+1}$ inside the parenthesis in (\ref{eq:hg-est}) and leave it untouched for the first term and use the just derived bound for the second term we get the desired inequality for $G_\alpha(z,w)$.
\end{proof}

\begin{remark}
That $G_\alpha(z,w)$ should satisfy an estimate of this type can be seen hinted at in Garabedian's paper \cite{garabedian}, where he quantifies Green's function for the weighted Laplacian with weight $\rho\in C^1(\c{\D})$ as:
$$G_\rho(z,w)=-\rho(w)\log|z-w| + \text{continuous terms}.$$
\end{remark}

These estimates allows us to quickly get the desired boundary behavior of $G_\alpha(z,w)$.
\begin{proposition}
For fixed $w\in\D$ then $G_\alpha(z,w)\to 0$ when $|z|\upto 1$.
\end{proposition}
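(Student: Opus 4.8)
The plan is to read off the boundary behaviour directly from the fundamental estimate in Lemma~\ref{l:Gest}, since for fixed $w$ every factor appearing there can be controlled as $|z|\upto 1$. Writing that estimate as
$$|G_\alpha(z,w)| \leq C_\alpha \frac{(1-|w|^2)^{\alpha+1}(1-|z|^2)^{\alpha+1}}{|1-\c{z}w|^{\alpha+2}} + C_\alpha(1-|w|^2)^\alpha G(z,w),$$
I would treat the two terms separately, keeping $w$ fixed so that the powers of $1-|w|^2$ are harmless constants.

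For the first term I would first note that the denominator stays bounded away from zero. By the reverse triangle inequality,
$$|1-\c{z}w| \geq 1 - |z||w| \geq 1 - |w| > 0$$
for every $z\in\D$, so $|1-\c{z}w|^{-(\alpha+2)}$ is bounded uniformly in $z$ by a constant depending only on $w$. Since $\alpha+1 > 0$, the numerator factor $(1-|z|^2)^{\alpha+1}$ tends to $0$ as $|z|\upto 1$, and hence the whole first term vanishes in the limit, regardless of how $z$ approaches $\T$.

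For the second term I would exploit the fact that the classical Green's function tends to zero at the boundary. Recalling from \eqref{eq:g-phi} that $g(z,w) = 1 - |\phi_w(z)|^2$, and that $G(z,w) = -\log|\phi_w(z)|^2 = -\log(1 - g(z,w))$, it suffices to show $g(z,w)\to 0$. Using the explicit formula $g(z,w) = (1-|z|^2)(1-|w|^2)/|1-z\c{w}|^2$ together with the same lower bound on the denominator, $g(z,w)\to 0$ as $|z|\upto 1$; continuity of $-\log(1-\cdot)$ at $0$ then gives $G(z,w)\to 0$, and with it the second term.

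Combining the two, the right-hand side of the estimate tends to $0$, so $G_\alpha(z,w)\to 0$ as $|z|\upto 1$. I do not anticipate a genuine obstacle here: the only point requiring a little care is that the convergence should hold along any sequence approaching $\T$, not merely radially, and this is automatic because the estimate is phrased entirely in terms of $1-|z|^2$ and $|1-\c{z}w|$, both of which are controlled uniformly in the argument of $z$ once $w$ is fixed.
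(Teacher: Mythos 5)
Your proposal is correct and follows essentially the same route as the paper: both apply Lemma~\ref{l:Gest}, kill the first term by bounding $|1-\c{z}w|$ below via the reverse triangle inequality so that $(1-|z|^2)^{\alpha+1}\to 0$ does the work, and kill the second term via the boundary vanishing of the classical Green's function $G(z,w)$. The only difference is cosmetic: the paper treats $G(z,w)\to 0$ as a known classical fact, whereas you verify it explicitly through $g(z,w)\to 0$, which is a harmless (and slightly more self-contained) elaboration.
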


\begin{proof}
We will use the estimate in the previous lemma and show that both terms have the desired limit.
If we fix $w\in\D$ then $G(z,w)\to 0$ as $|z|\to 1$ so the second term is immediately clear.
The first term we estimate using the triangle inequality as
$$\frac{(1-|w|^2)^{\alpha+1}(1-|z|^2)^{\alpha+1}}{|1-\c{z}w|^{\alpha+2}} \leq (1-|z|^2)^{\alpha+1} \frac{(1-|w|^2)^{\alpha+1}}{(1-|w|)^{\alpha+2}}$$
which tends to zero as $|z|\to 1$ for fixed $w\in\D$.
\end{proof}

Having established the boundary condition we turn our attention towards showing the remaining condition $L_\alpha G_\alpha(z,w)=\delta_0(z-w)$ and to do this we use the same estimate to show the following lemma:

\begin{lemma}
\label{l:Gint}
The function $G_\alpha(\cdot,w)$ belongs to $\Lloc(\D)$ for fixed $w\in\D$.
\end{lemma}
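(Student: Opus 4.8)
The plan is to invoke the fundamental estimate of Lemma~\ref{l:Gest} and check that each of its two terms is locally integrable for fixed $w\in\D$. Since $\D$ carries finite area measure, it is no harder — and indeed cleaner — to show that both terms lie in $L^1(\D)$, which gives the claim a fortiori.

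For the first term, fix $w\in\D$ and note that the denominator is bounded away from zero uniformly in $z$, since $|1-\c{z}w|\geq 1-|z||w|\geq 1-|w|>0$, while the factor $(1-|z|^2)^{\alpha+1}\leq 1$ because $\alpha+1>0$. Hence
$$\frac{(1-|w|^2)^{\alpha+1}(1-|z|^2)^{\alpha+1}}{|1-\c{z}w|^{\alpha+2}}\leq \frac{(1-|w|^2)^{\alpha+1}}{(1-|w|)^{\alpha+2}},$$
a finite constant depending only on $w$ and $\alpha$. The first term is therefore a bounded function of $z$ on $\D$ and integrates to a finite quantity against the finite measure $dA$.

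For the second term the prefactor $(1-|w|^2)^\alpha$ is a constant, so everything reduces to the local integrability of the classical Green's function $G(z,w)=-\log|\phi_w(z)|^2$. For fixed $w$ this function is continuous on $\D\setminus\{w\}$ (it even extends continuously to $\c{\D}\setminus\{w\}$ with boundary value $0$), so it is bounded on every compact subset avoiding $w$. At the single singularity $z=w$ the M\"obius factor $\phi_w$ has a simple zero, so $G(z,w)$ behaves like $-2\log|z-w|$ up to a bounded term; the local integrability then follows from the standard fact that $\log|z-w|$ is locally integrable on $\C$, seen by passing to polar coordinates about $w$ where $\int_0^\epsilon |\log r|\,r\,dr<\infty$.

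There is no genuine obstacle here: the only input beyond Lemma~\ref{l:Gest} is the classical local integrability of the logarithm. Adding the two bounds shows $\int_K |G_\alpha(z,w)|\,dA(z)<\infty$ for every compact $K\subset\D$, which is precisely the assertion that $G_\alpha(\cdot,w)\in\Lloc(\D)$.
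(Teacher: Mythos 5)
Your proof is correct and follows essentially the same route as the paper: apply the estimate of Lemma~\ref{l:Gest}, observe that for fixed $w\in\D$ the first term is bounded in $z$ (via $|1-\c{z}w|\geq 1-|w|$), and reduce the second term to the local integrability of the classical Green's function $G(z,w)$. The only difference is cosmetic: the paper quotes the intermediate bound $|G_\alpha(z,w)|\leq C_\alpha(1-|w|^2)^\alpha(1+G(z,w))$ from the proof of Lemma~\ref{l:Gest} and cites $G(\cdot,w)\in\Lloc(\D)$ as known, whereas you verify that classical fact by isolating the logarithmic singularity at $z=w$.
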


\begin{proof}
From the proof of Lemma~\ref{l:Gest} we see that we can estimate $G_\alpha(z,w)$ as
\begin{equation}
\label{eq:Gest2}
|G_\alpha(z,w)| \leq C_\alpha (1-|w|^2)^{\alpha}  \left(1 + G(z,w)\right)
\end{equation}
if we just factor out $(1-|w|^2)^\alpha$ from the first term and use the estimates of the remaining factors mentioned above.
From this the lemma follows because $G(z,w)\in\Lloc(\D)$ for fixed $w\in\D$.
\end{proof}

\subsection{Verifying $L_\alpha G_\alpha(z,w)=\delta_0(z-w)$}
\label{s:LG}
Taking the $\dd_z$-derivative of (\ref{eq:g-phi}) we get after some calculations that
$$\dd_z g(z,w) = - \dd_z(\phi_w(z)\c{\phi_w(z)}) = \frac{1-|w|^2}{(1-z\c{w})} \frac{1}{w-z} |\phi_w(z)|^2, \quad z\neq w.$$
So for $z\neq w$ we can consider the $\dd_z$-derivative of the composition:
$$\dd_z h\circ g(z,w) = \frac{g^\alpha}{1-g} \dd_z g(z,w) = \frac{g^\alpha}{|\phi_w(z)|^2} \dd_z g(z,w) = g^\alpha \frac{1-|w|^2}{(1-z\c{w})} \frac{1}{w-z}.$$
After inserting the expression of $g$ and some simplifications we get:
$$\dd_z h\circ g(z,w)=\frac{(1-|z|^2)^\alpha}{(1-\c{z}w)^\alpha} \frac{(1-|w|^2)^{\alpha+1}}{(1-z\c{w})^{\alpha+1}} \frac{1}{w-z}, \quad z\neq w.$$
Therefore we define
\begin{equation}
\label{eq:F}
F_{\alpha,w}(z)=(1-\c{z}w)^\alpha \dd_z h\circ g(z,w)=\frac{(1-|w|^2)^{\alpha+1}}{(1-z\c{w})^{\alpha+1}} \frac{(1-|z|^2)^\alpha}{w-z}, \quad z\neq w.
\end{equation}
If we observe that the factor $(1-\c{z}w)^\alpha$ is anti-analytic in $\D$ and hence vanish under the $\dd_z$-derivative we get
\begin{equation}
\label{eq:dzG}
\dd_z G_\alpha(z,w) = F_{\alpha,w}(z), \quad z\neq w.
\end{equation}
To show that $F_{\alpha,w}$ wholly determines the weak $\dd_z$-derivative of $G_\alpha$ we proceed by noting that $F_{\alpha,w}\in\Lloc(\D)$ and so defines a distribution in the desired sense.
Indeed, if we estimate $\displaystyle \frac{1-|w|^2}{1-z\c{w}}$ as we did in (\ref{eq:reverse}) we see that the first factor in $F_{\alpha,w}$ is bounded and the second factor $\displaystyle \frac{(1-|z|^2)^\alpha}{w-z}$ belongs to $\Lloc(\D)$.

Now we can calculate the weak $\dd_z$-derivative of $G_\alpha(z,w)$:
\begin{lemma}
For fixed $w\in\D$ we have
$$\dd_z G_\alpha(z,w)=F_{\alpha,w}(z)=\frac{(1-|w|^2)^{\alpha+1}}{(1-z\c{w})^{\alpha+1}} \frac{(1-|z|^2)^\alpha}{w-z}$$
in the sense of distributions on $\D$.
\end{lemma}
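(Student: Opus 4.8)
The plan is to verify the identity straight from the definition of the weak derivative, i.e.\ to show that
$$-\int_\D G_\alpha(z,w)\,\dd_z\phi(z)\,dA(z)=\int_\D F_{\alpha,w}(z)\,\phi(z)\,dA(z)$$
for every $\phi\in C_0^\infty(\D)$. Both integrals make sense because $G_\alpha(\cdot,w)\in\Lloc(\D)$ by Lemma~\ref{l:Gint} and $F_{\alpha,w}\in\Lloc(\D)$ as shown above. The only genuine difficulty is the point singularity at $z=w$: the pointwise relation (\ref{eq:dzG}), $\dd_z G_\alpha=F_{\alpha,w}$, is only known for $z\neq w$, and a priori the weak $\dd_z$-derivative could differ from $F_{\alpha,w}$ by a distribution supported at $w$.

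First I would excise a small disc and work on $\D\setminus\c{\ball{w}{\epsilon}}$, where $G_\alpha(\cdot,w)$ is smooth. Applying the complex form of Stokes' theorem to the product $G_\alpha\phi$, together with the product rule $\dd_z(G_\alpha\phi)=(\dd_z G_\alpha)\phi+G_\alpha\,\dd_z\phi$ and the pointwise identity (\ref{eq:dzG}), and using that $\phi$ has compact support in $\D$ so that the contribution of the outer boundary $\T$ drops out, I expect to arrive at
$$\int_{\D\setminus\ball{w}{\epsilon}} F_{\alpha,w}\,\phi\,dA+\int_{\D\setminus\ball{w}{\epsilon}} G_\alpha\,\dd_z\phi\,dA=c\oint_{\dd\ball{w}{\epsilon}} G_\alpha\,\phi\,d\c{z},$$
where $c$ is an explicit nonzero constant determined by the orientation of the circle and the normalization $dA=\frac{1}{\pi}dx\,dy$.

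Next I would let $\epsilon\to0$. The two area integrals on the left converge to the corresponding integrals over all of $\D$, since $F_{\alpha,w},G_\alpha\in\Lloc(\D)$ and $\phi,\dd_z\phi$ are bounded, so the excised pieces vanish in the limit. Everything therefore reduces to showing that the boundary term on the right tends to zero. Here I would invoke Lemma~\ref{l:Gest}: for $z$ near $w$ the first term in that estimate stays bounded, while the second is controlled by $(1-|w|^2)^\alpha G(z,w)$, and $G(z,w)=-\log|\phi_w(z)|^2$ has only a logarithmic singularity at $z=w$. Parametrizing $\dd\ball{w}{\epsilon}$ by $z=w+\epsilon e^{i\theta}$ gives $|d\c{z}|=\epsilon\,d\theta$, so the bound $|G_\alpha(z,w)|\leq C_w\,|\log\epsilon|$ on the circle forces the boundary integral to be of size $O(\epsilon|\log\epsilon|)$, which indeed vanishes.

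The \emph{main obstacle} is precisely this last estimate: one must be certain that $G_\alpha(\cdot,w)$ inherits the mild logarithmic blow-up of the classical $G(\cdot,w)$ rather than a stronger singularity, since otherwise the excised boundary term could survive and contribute a Dirac mass at $w$, making the weak derivative differ from $F_{\alpha,w}$. Lemma~\ref{l:Gest} is exactly what rules this out. It is worth noting that this is consistent with the structure of the computation of $L_\alpha G_\alpha$: no Dirac mass appears at the $\dd_z$-stage, and the expected $\delta_0(z-w)$ will instead emerge from the subsequent $\dbar_z$-differentiation of $(1-|z|^2)^{-\alpha}F_{\alpha,w}$.
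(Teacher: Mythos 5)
Your proposal is correct and follows essentially the same route as the paper: excise a small disc around $w$, integrate by parts (Stokes) on the complement using the pointwise identity (\ref{eq:dzG}), and kill the boundary term via the logarithmic bound on $G_\alpha(\cdot,w)$ coming from the estimate in Lemma~\ref{l:Gint}/Lemma~\ref{l:Gest}, so that it is of size $O(\epsilon|\log\epsilon|)$. The paper phrases the boundary term with the unit normal and arc-length measure and justifies the $\epsilon\to 0$ limit of the area integral by dominated convergence, but these are only cosmetic differences from your version.
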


\begin{proof}
Lemma~\ref{l:Gint} says that $G_\alpha(\cdot,w)\in\Lloc(\D)$ so that its derivative has the action
$$\action{\dd_z G_\alpha(z,w)}{\phi(z)}=-\int_\D G_\alpha(z,w)\dd_z\phi dA(z), \quad \phi\in C_0^\infty(\D).$$
By Lebesgue's dominated convergence theorem we can find the action by the limit
\begin{equation}
\label{eq:intlim}
\int_\D G_\alpha(z,w)\dd_z\phi dA(z)=\lim_{\epsilon\to 0} \int_{\D\setminus \ball{w}{\epsilon}} G_\alpha(z,w)\dd_z\phi dA(z).
\end{equation}
For any small $\epsilon>0$ let $D_\epsilon=\ball{w}{\epsilon}$ so that partial integration (remembering the normalizations of the measures) gives
$$\int_{\D\setminus D_\epsilon} G_\alpha(z,w)\dd_z\phi dA(z) = \int_{\dd D_\epsilon} G_\alpha(z,w) \phi \c{\nu} ds(z) - \int_{\D\setminus D_\epsilon}(\dd_z G_\alpha(z,w))\phi dA(z),$$
where $\nu$ is the unit outward normal of $\D\setminus D_\epsilon$, that is, the inward unit normal of $D_\epsilon$.

We want to show that the line integral vanish and therefore estimate it as
$$\left|\int_{\dd D_\epsilon} G_\alpha(z,w) \phi \c{\nu} ds(z)\right| \leq C_\phi \int_{\dd D_\epsilon} |G_\alpha(z,w)| ds(z)$$
where $C_\phi$ is a constant depending only on $\sup\phi$.
Here we go back to the estimate (\ref{eq:Gest2}) in Lemma~\ref{l:Gint} and we get:
$$|G_\alpha(z,w)| \leq C_{\alpha,w}\left(1 - \log\left|\frac{z-w}{1-\c{z}w}\right|^2\right)$$
so if we consider this for $z$ on $\dd D_\epsilon$, and change the constant appropriately, we see that
$$|G_\alpha(z,w)| \leq C_{\alpha,w}(1 - \log \epsilon^2)$$
since $|z-w|=\epsilon$ and $|1-\c{z}w|$ is bounded away from zero.
So we get the limit
\begin{align*}
\left|\int_{\dd D_\epsilon} G_\alpha(z,w) \phi \c{\nu} ds(z)\right| & \leq C_{\phi,\alpha,w} (1-\log \epsilon^2) \int_{\dd D_\epsilon} ds = C_{\phi,\alpha,w} (1-\log \epsilon^2) \epsilon \to 0 \quad \text{as $\epsilon\to 0$.}
\end{align*}

From (\ref{eq:dzG}) we have $\dd_z G_\alpha(z,w)=F_{\alpha,w}(z)$ when $z\neq w$ and therefore
\begin{equation*}
\action{\dd_z G_\alpha(z,w)}{\phi(z)}=\lim_{\epsilon\to 0} \int_{\D\setminus D_\epsilon} (\dd_z G_\alpha(z,w))\phi dA(z)=\int_\D F_{\alpha,w}(z) \phi dA(z) = \action{F_{\alpha,w}}{\phi}. \qedhere
\end{equation*}
\end{proof}

Given this lemma we can show the last remaining condition for Green's function.

\begin{proposition}
\label{p:LG}
For fixed $w\in\D$ it holds that
$$L_\alpha G_\alpha(z,w)= \delta_0(z-w)$$
in the sense of distributions on $\D$.
\end{proposition}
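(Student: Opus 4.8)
The plan is to reduce the whole computation to the standard distributional identity for the Cauchy kernel. By the preceding lemma the weak $\dd_z$-derivative of $G_\alpha(\cdot,w)$ is already known explicitly, so all that remains is to apply the operator $-\dbar_z\bigl[(1-|z|^2)^{-\alpha}\,\cdot\,\bigr]$ to $F_{\alpha,w}$ and to recognize the outcome as a point mass at $w$.

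First I would multiply $F_{\alpha,w}$ by the weight $(1-|z|^2)^{-\alpha}$. Since this weight is smooth inside $\D$ the multiplication is legitimate at the level of distributions, and the factor $(1-|z|^2)^\alpha$ occurring in $F_{\alpha,w}$ cancels exactly, leaving
$$(1-|z|^2)^{-\alpha} F_{\alpha,w}(z) = \frac{(1-|w|^2)^{\alpha+1}}{(1-z\c{w})^{\alpha+1}} \cdot \frac{1}{w-z}.$$
I would then write this as $A_w(z)/(w-z)$ with $A_w(z)=(1-|w|^2)^{\alpha+1}(1-z\c{w})^{-(\alpha+1)}$, which is holomorphic in $z$ on all of $\D$ because the denominator is bounded away from zero for fixed $w$, as noted after the statement of the theorem.

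Next I would apply $-\dbar_z$ and use the product rule for the $\dbar_z$-derivative of a smooth factor times a distribution. Because $A_w$ is holomorphic we have $\dbar_z A_w=0$, so only the term $A_w(z)\,\dbar_z\tfrac{1}{w-z}$ survives. The key ingredient is the fundamental-solution identity for the Cauchy operator, which under the normalization $dA=\tfrac{1}{\pi}dx\,dy$ reads $\dbar_z \tfrac{1}{z-w}=\delta_0(z-w)$, hence $\dbar_z\tfrac{1}{w-z}=-\delta_0(z-w)$. Finally, multiplication of the point mass by the smooth factor replaces $A_w(z)$ by its value at the support point, and $A_w(w)=(1-|w|^2)^{\alpha+1}/(1-|w|^2)^{\alpha+1}=1$, so that $A_w(z)\delta_0(z-w)=\delta_0(z-w)$. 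Collecting signs gives $L_\alpha G_\alpha = -\dbar_z\bigl[(1-|z|^2)^{-\alpha}F_{\alpha,w}\bigr] = -A_w(w)\bigl(-\delta_0(z-w)\bigr)=\delta_0(z-w)$.

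The main obstacle is bookkeeping rather than anything conceptual: I must confirm that the Cauchy-kernel identity carries the correct constant under the paper's normalized area measure (the normalization is precisely what makes that constant equal to $1$), and I must justify both the product rule $\dbar_z(A_w u)=A_w\,\dbar_z u$ (valid since $A_w$ is smooth and $u\in\Lloc$) and the collapse $A_w\delta_0(z-w)=A_w(w)\delta_0(z-w)$. The pleasant feature that forces the final constant to be exactly $1$ is the cancellation $A_w(w)=1$, which is exactly the effect the weight in $G_\alpha$ was designed to produce.
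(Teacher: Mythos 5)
Your proposal is correct and takes essentially the same route as the paper: both start from the lemma giving $\dd_z G_\alpha(z,w)=F_{\alpha,w}(z)$, cancel the weight $(1-|z|^2)^{\mp\alpha}$, exploit the holomorphy in $z$ of the factor $(1-|w|^2)^{\alpha+1}(1-z\c{w})^{-(\alpha+1)}$, and conclude via the Cauchy-kernel identity $\dbar_z\frac{1}{z-w}=\delta_0(z-w)$ together with the evaluation $A_w(w)=1$ (using the principal branch). The only difference is presentational: you phrase the steps as distribution-calculus rules (product rule for a smooth factor, collapse of the smooth factor at the point mass), whereas the paper unwinds the identical manipulations through explicit test-function pairings.
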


\begin{proof}
By the definition $L_\alpha=-\dbar_z (1-|z|^2)^{-\alpha} \dd_z$ and the previous lemma we get
$$\action{L_\alpha G_\alpha(z,w)}{\phi(z)} = \action{\dd_z G_\alpha(z,w)}{(1-|z|^2)^{-\alpha}\dbar_z\phi(z)} = \action{F_{\alpha,w}(z)}{(1-|z|^2)^{-\alpha}\dbar_z\phi(z)}.$$
Inserting the expression for $F_{\alpha,w}$ we get cancellation of the two smooth factors $(1-|z|^2)^\alpha$ and $(1-|z|^2)^{-\alpha}$:
\begin{align*}
\action{L_\alpha G_\alpha(z,w)}{\phi(z)} & = \action{\frac{(1-|w|^2)^{\alpha+1}}{(1-z\c{w})^{\alpha+1}} \frac{(1-|z|^2)^\alpha}{w-z}}{(1-|z|^2)^{-\alpha}\dbar_z\phi(z)}\\
&  = \action{\frac{(1-|w|^2)^{\alpha+1}}{(1-z\c{w})^{\alpha+1}} \frac{1}{w-z}}{\dbar_z\phi(z)}.
\end{align*}
Here observe that the first factor $\frac{(1-|w|^2)^{\alpha+1}}{(1-z\c{w})^{\alpha+1}}$ is analytic in $\D$ and therefore
\begin{align*}
\action{\frac{(1-|w|^2)^{\alpha+1}}{(1-z\c{w})^{\alpha+1}} \frac{1}{w-z}}{\dbar_z\phi(z)} & =\action{\frac{1}{w-z}}{ \dbar_z\left(\frac{(1-|w|^2)^{\alpha+1}}{(1-z\c{w})^{\alpha+1}} \phi(z)\right)}\\
& =\action{ \dbar_z\frac{1}{z-w}}{\frac{(1-|w|^2)^{\alpha+1}}{(1-z\c{w})^{\alpha+1}} \phi(z)}.
\end{align*}
It is well-know that $\dbar_z\frac{1}{z-w}=\delta_0(z-w)$ (the uninitiated reader may see Lemma 20.3 in \cite{rudin} or \cite{h}) so
$$\action{ \dbar_z\frac{1}{z-w}}{\frac{(1-|w|^2)^{\alpha+1}}{(1-z\c{w})^{\alpha+1}} \phi(z)} = \frac{(1-|w|^2)^{\alpha+1}}{(1-|w|^2)^{\alpha+1}} \phi(w) = \phi(w),$$
where we use the assumption that we have the principal branch of the complex exponentiation.
Hence the desired identity holds.
\end{proof}

The last two propositions verified the definition of Green's function and therefore the theorem is proved by referring to the uniqueness of the Dirichlet problem.
Now we use Green's function to solve Poisson's equation.

\section{Poisson's equation}
We will construct or solution using the following potential:

\begin{definition}
Given a complex Borel measure $\mu$ on $\D$ which satisfies
\begin{equation}
\label{eq:measure-condition}
\int_\D (1-|w|^2)^{\alpha+1} d|\mu|(w) < \infty
\end{equation}
define \textit{Green's potential} as
$$G_\alpha^\mu(z) = \int_\D G_\alpha(z,w) d\mu(w)$$
where $G_\alpha(z,w)$ is Green's function for $L_\alpha$.
\end{definition}

Note that any such measure $\mu$ will be locally finite and therefore a Radon measure.
Indeed, for any compact $K\subset\D$ let $M=\supp_K (1-|w|^2)^{-(\alpha+1)}$ such that
$$|\mu(K)| \leq \int_K d|\mu| \leq M\int_K (1-|w|^2)^{\alpha+1} d|\mu|(w) \leq M\int_\D (1-|w|^2)^{\alpha+1} d|\mu|(w) < \infty.$$
Hence we can consider $\mu$ as a distribution on $\D$ with the action defined in (\ref{eq:radon-action}).

\begin{theorem}
\label{t:poisson}
The Green potential $G_\alpha^\mu(z)$ for $\mu$ satisfying (\ref{eq:measure-condition}) is the unique solution to Poisson's equation for $L_\alpha$ ($\alpha>-1$):
\begin{enumerate}
\item \text{$L_\alpha G_\alpha^\mu = \mu$ in sense of distributions on $\D$ and}
\item \text{$G_\alpha^\mu(re^{i\cdot}) \to 0$ in $L^1(\T)$ when $r\upto 1$.}
\end{enumerate}
\end{theorem}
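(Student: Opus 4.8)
The plan is to verify the three requirements in turn: that $G_\alpha^\mu$ is a well-defined element of $\Lloc(\D)$, that it satisfies (1), and that it satisfies (2); uniqueness will then be reduced to the Dirichlet theory of \cite{ow}. The workhorse for the first two parts is the single estimate
$$\int_K |G_\alpha(z,w)|\,dA(z) \leq C_K (1-|w|^2)^{\alpha+1}, \qquad w\in\D,$$
valid for every compact $K\subset\D$. To obtain it I would integrate the bound of Lemma~\ref{l:Gest} over $z\in K$: on the first term $|1-\c{z}w|$ is bounded below on $K\times\D$, so it contributes at most $C_K(1-|w|^2)^{\alpha+1}$, while for the second term I would use the identity $\int_\D G(z,w)\,dA(z)=1-|w|^2$ (which comes from averaging the circular integral below over radii) to upgrade the factor $(1-|w|^2)^\alpha$ to $(1-|w|^2)^{\alpha+1}$. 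Tonelli's theorem together with (\ref{eq:measure-condition}) then gives $\int_K|G_\alpha^\mu|\,dA<\infty$, so $G_\alpha^\mu\in\Lloc(\D)$.

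For part (1) I would compute, for $\phi\in C_0^\infty(\D)$, using (\ref{eq:L-def}),
$$\action{L_\alpha G_\alpha^\mu}{\phi} = \int_\D G_\alpha^\mu(z)\,\c{L_\alpha}\phi(z)\,dA(z) = \int_\D \action{G_\alpha(\cdot,w)}{\c{L_\alpha}\phi}\,d\mu(w),$$
the interchange being legitimate by Fubini since $\c{L_\alpha}\phi\in C_0^\infty(\D)$ is bounded and supported in some compact $K$, so the displayed estimate bounds the double integral by $C_{K,\phi}\int_\D(1-|w|^2)^{\alpha+1}\,d|\mu|<\infty$. The inner pairing equals $\action{L_\alpha G_\alpha(\cdot,w)}{\phi}=\phi(w)$ by Proposition~\ref{p:LG}, and integrating gives $\action{\mu}{\phi}$, which is exactly (1).

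The substance of the argument is the boundary condition (2), which I would prove by dominated convergence on the measure side after the bound
$$\int_\T |G_\alpha^\mu(re^{i\theta})|\,ds(\theta) \leq \int_\D I_r(w)\,d|\mu|(w), \qquad I_r(w):=\int_\T |G_\alpha(re^{i\theta},w)|\,ds(\theta).$$
For fixed $w$ one has $\sup_{|z|=r}|G_\alpha(z,w)|\to 0$ as $r\upto 1$ by the boundary proposition already established, hence $I_r(w)\to 0$ pointwise. The crux is a dominating bound $I_r(w)\leq C(1-|w|^2)^{\alpha+1}$ uniform for $r$ near $1$. Feeding Lemma~\ref{l:Gest} into $I_r$ produces two circular integrals: the Forelli--Rudin estimate $\int_\T |1-\c{z}w|^{-(\alpha+2)}\,ds \leq C(1-r^2|w|^2)^{-(\alpha+1)}$ (valid since $\alpha+2>1$) controls the first, because $(1-r^2)^{\alpha+1}(1-r^2|w|^2)^{-(\alpha+1)}\leq 1$; and the mean value identity $\int_\T G(re^{i\theta},w)\,ds=-2\log\max(r,|w|)$ controls the second, since $-\log t\leq (1-t)/t$ gives $-2\log\max(r,|w|)\leq C_{r_0}(1-|w|^2)$ once $r\geq r_0$. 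Both contributions are thus $\leq C(1-|w|^2)^{\alpha+1}$, which is $|\mu|$-integrable by (\ref{eq:measure-condition}), so dominated convergence yields $\|G_\alpha^\mu(re^{i\cdot})\|_{L^1(\T)}\to 0$.

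Finally, for uniqueness, if $u$ is another solution then $v=G_\alpha^\mu-u$ satisfies $L_\alpha v=0$ with $v(re^{i\cdot})\to 0$ in $L^1(\T)$, so $v\equiv 0$ by the uniqueness of the homogeneous Dirichlet problem for $L_\alpha$ established in \cite{ow}. I expect the genuine obstacle to be the uniform domination in part (2): everything else is Fubini together with results already in hand, whereas there one must extract the decisive extra factor $1-|w|^2$ from the two circular integrals and check that the resulting weight matches (\ref{eq:measure-condition}) exactly; getting the exponent in the Forelli--Rudin estimate right, so that the $(1-r^2)$-power survives and the quotient stays bounded by $1$, is where the computation can most easily go wrong.
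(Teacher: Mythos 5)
Your proposal is correct, and its overall architecture coincides with the paper's: bound $\|G_\alpha^\mu(re^{i\cdot})\|_{L^1(\T)}$ by $\int_\D I_r(w)\,d|\mu|(w)$ with $I_r(w)=\int_0^{2\pi}|G_\alpha(re^{i\theta},w)|\,\frac{d\theta}{2\pi}$, dominate $I_r(w)\le C_{\alpha,r_0}(1-|w|^2)^{\alpha+1}$ for $r_0\le r<1$ via the splitting coming from Lemma~\ref{l:Gest}, conclude (2) by dominated convergence, obtain (1) from Fubini together with Proposition~\ref{p:LG}, and delegate uniqueness to \cite{ow}. The differences are at the level of which auxiliary estimates are invoked, and they are worth recording. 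For the first circular integral the paper majorizes the integrand by the kernel $K_\alpha(rse^{i\tau})$ and cites Olofsson's sharp bound $M_\alpha(r)\le 1$ (Theorem~3.1 of \cite{o}), whereas you use the classical Forelli--Rudin estimate with exponent $\alpha+2=1+(\alpha+1)$, $\alpha+1>0$; your route is more standard and self-contained, but yields a constant $C_\alpha$ rather than $1$, which is immaterial for domination purposes. For part (1) the paper proves the stronger global fact $G_\alpha^\mu\in L^1(\D)$, via $\int_\D|G_\alpha(z,w)|\,dA(z)=\int_0^1 I_r(w)\,2r\,dr\le C_\alpha(1-|w|^2)^{\alpha+1}$, while you settle for the local estimate over a compact $K$; this indeed suffices, since $\c{L_\alpha}\phi$ is smooth and supported in the compact set $\supp\phi$, so both versions justify the same Fubini interchange. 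Finally, you get the pointwise limit $I_r(w)\to 0$ by citing the boundary proposition (legitimate: the estimates in its proof depend only on $|z|$ and $|w|$, hence are uniform on circles $|z|=r$), while the paper rederives the limit for the two pieces $I_r^1$ and $I_r^2$ separately; your mean-value identity $\int_0^{2\pi}G(re^{i\theta},w)\,\frac{d\theta}{2\pi}=-2\log\max(r,|w|)$ and the resulting bound $C_{r_0}(1-|w|^2)$ are exactly the paper's (\ref{eq:Irw2}) and the lemma that follows it.
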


Again the uniqueness is a consequence of the uniqueness of Dirichlet's problem.
The proof of the theorem will be divided into two sections dealing with the two conditions separately.

\subsection{The $L^1(\T)$-boundary limit of $G_\alpha^\mu$}
The condition that $G_\alpha^\mu(re^{i\cdot}) \to 0$ in $L^1(\T)$ when $r\upto 1$ means explicitly that we want the radial $L^1(\T)$-means to tend to zero:
$$\|G_\alpha^\mu(re^{i\cdot})\|_{L^1(\T)} = \int_0^{2\pi} |G_\alpha^\mu(re^{i\theta})| \frac{d\theta}{2\pi} \to 0 \quad\text{when}\quad r\upto 1.$$
If we use the triangle inequality and Fubini's theorem we can study this as:
\begin{align*}
\int_0^{2\pi} |G_\alpha^\mu(re^{i\theta})| \frac{d\theta}{2\pi} & \leq \int_0^{2\pi} \left( \int_\D |G_\alpha(re^{i\theta},w)| d|\mu|(w) \right)  \frac{d\theta}{2\pi} = \int_\D  \left( \int_0^{2\pi} |G_\alpha(re^{i\theta},w)| \frac{d\theta}{2\pi} \right)d|\mu|(w).
\end{align*}

So if we can deduce that we can apply the dominated convergence theorem to the inner integral, considered as a function of $w$ and depending on $r$, and that it tends to zero pointwise with respect to $w$ then the desired boundary limit will hold.
The following proposition will provide the required details.

\begin{proposition}
The function
$$I_r(w)=\int_0^{2\pi} |G_\alpha(re^{i\theta},w)| \frac{d\theta}{2\pi}$$
satisfies the estimate
$$I_r(w) \leq C_{\alpha,r_0} (1-|w|^2)^{\alpha+1}, \quad 0 < r_0 \leq r < 1,$$
and the pointwise limit
$$\text{$I_r(w)\to 0$ when $r\upto 1$ for fixed $w\in\D$.}$$
\end{proposition}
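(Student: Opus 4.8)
The plan is to estimate $I_r(w)$ by inserting the pointwise bound of Lemma~\ref{l:Gest} under the integral sign and handling the two resulting terms separately; it is their combination that will produce the claimed factor $(1-|w|^2)^{\alpha+1}$. Writing $z=re^{i\theta}$ throughout, the first term carries the factor $\frac{(1-|w|^2)^{\alpha+1}(1-|z|^2)^{\alpha+1}}{|1-\c{z}w|^{\alpha+2}}$, in which $(1-|z|^2)^{\alpha+1}=(1-r^2)^{\alpha+1}$ is constant in $\theta$, so I am left to control $\int_0^{2\pi}|1-re^{-i\theta}w|^{-(\alpha+2)}\frac{d\theta}{2\pi}$. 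Putting $w=|w|e^{i\psi}$ and substituting $\phi=\psi-\theta$ reduces this to the standard integral $\int_0^{2\pi}|1-r|w|e^{i\phi}|^{-(\alpha+2)}\frac{d\phi}{2\pi}$, whose size is governed by the exponent; since $\alpha+2>1$, the well-known asymptotic (see, e.g., \cite{h-book}) bounds it by $C_\alpha(1-r|w|)^{-(\alpha+1)}$. The decisive cancellation is then $\frac{1-r^2}{1-r|w|}\leq\frac{1-r^2}{1-r}=1+r<2$, so this whole term is controlled by $C_\alpha(1-|w|^2)^{\alpha+1}$, uniformly in $r\in[0,1)$.

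For the second term $(1-|w|^2)^\alpha G(z,w)$ I would compute the circular mean $\int_0^{2\pi}G(re^{i\theta},w)\frac{d\theta}{2\pi}$ exactly. Splitting $G(z,w)=-\log|z-w|^2+\log|1-\c{z}w|^2$, the second summand is harmonic in $z$ throughout $\D$, so its mean over $|z|=r$ equals its value at the origin, namely $0$; the first summand contributes $-2\log\max(r,|w|)$ by the classical mean-value formula for $\log|z-w|$. Thus the mean of $G$ is exactly $-2\log\max(r,|w|)$, and this is where the missing power of $(1-|w|^2)$ is hiding: using the elementary inequality $-\log t\leq(1-t)/t$ together with $\max(r,|w|)\geq r\geq r_0$ gives $-2\log\max(r,|w|)\leq\frac{2}{r_0}\bigl(1-\max(r,|w|)\bigr)\leq\frac{2}{r_0}(1-|w|^2)$. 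Multiplying by $(1-|w|^2)^\alpha$ yields $C_{\alpha,r_0}(1-|w|^2)^{\alpha+1}$, and adding the two terms gives the claimed uniform estimate; note the lower bound $r_0$ is genuinely needed precisely to keep $\frac{1}{\max(r,|w|)}$ bounded.

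The pointwise limit is the easy part. For fixed $w$, as $r\upto1$ the integrand $G_\alpha(re^{i\theta},w)\to0$ uniformly in $\theta$: this follows directly from Lemma~\ref{l:Gest}, since $(1-r^2)^{\alpha+1}\to0$ while $|1-re^{-i\theta}w|\geq1-|w|$ stays bounded away from $0$, and $G(re^{i\theta},w)\to0$ uniformly because $|re^{i\theta}-w|$ and $|1-re^{-i\theta}w|$ both converge to $|e^{i\theta}-w|\geq1-|w|>0$. Uniform convergence of the integrand forces $I_r(w)\to0$ (alternatively, the exact mean $-2\log r\to0$ handles the second term and $(1-r^2)^{\alpha+1}\to0$ the first).

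The main obstacle I anticipate is the bookkeeping of the exact power of $(1-|w|^2)$: a naive bound on the second term yields only $(1-|w|^2)^\alpha$, and recovering the missing power hinges on computing the \emph{exact} circular mean of the classical Green's function and exploiting the lower bound $r_0$ — which is exactly why the estimate is asserted uniformly only for $r\geq r_0$ rather than for all $r\in(0,1)$.
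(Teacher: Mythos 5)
Your proof is correct, and it follows the same skeleton as the paper's: insert the bound of Lemma~\ref{l:Gest} under the integral, treat the two resulting terms separately, and compute the circular mean of the classical Green's function exactly — your value $-2\log\max(r,|w|)$ is precisely the paper's formula (\ref{eq:Irw2}), which the paper obtains by citing the mean-value property while you derive it by splitting off the harmonic summand $\log|1-\c{z}w|^2$. The genuine difference is the key lemma used to bound the first term. The paper controls
$$\int_0^{2\pi}\frac{(1-r^2)^{\alpha+1}}{|1-re^{-i\theta}w|^{\alpha+2}}\frac{d\theta}{2\pi}$$
by first replacing $1-r^2$ with $1-(r|w|)^2$ and using rotation invariance, so that the integral is dominated by the $L^1$-mean of the kernel $K_\alpha$ related to the $\alpha$-harmonic Poisson kernel, and then invokes Olofsson's Theorem~3.1 in \cite{o} (proved there via hypergeometric series) to get the sharp bound $1$. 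You instead quote the standard integral estimate $\int_0^{2\pi}|1-\rho e^{i\phi}|^{-(1+c)}\frac{d\phi}{2\pi}\leq C_c(1-\rho^2)^{-c}$ with $c=\alpha+1>0$ from the Bergman-space literature, and then exploit the cancellation $\frac{1-r^2}{1-r|w|}\leq 1+r<2$. Both routes are valid and give a bound uniform in $r\in[0,1)$; the paper's buys the clean constant $1$ and ties the argument to the $\alpha$-harmonic Poisson kernel theory it builds on, while yours is more elementary in the sense that it rests only on a textbook estimate, at the cost of an unspecified constant and of having to notice the cancellation explicitly. Your remaining deviations are cosmetic: for the second term you use $-\log t\leq(1-t)/t$ where the paper uses monotonicity of $-\log t/(1-t)$ (same $C_{r_0}(1-|w|^2)$ outcome, and both genuinely need $r\geq r_0$), and for the pointwise limit you argue uniform convergence of the integrand in $\theta$, whereas the paper passes to the limit in the two terms separately — your parenthetical alternative is exactly the paper's argument.
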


The estimate guarantees that $I_r(w)$ is dominated for $r$ close to 1 by a $\mu$-integrable function.
So that the pointwise limit together with the dominated convergence theorem will indeed give us the desired limit.

We begin by applying Lemma~\ref{l:Gest} and splitting the integral into two integrals, $I_r^1$ and $I_r^2$:
\begin{align}
I_r(w) & = \int_0^{2\pi} |G_\alpha(re^{i\theta},w)| \frac{d\theta}{2\pi} \nonumber\\
& \leq C_\alpha\left( (1-|w|^2)^{\alpha+1} \int_0^{2\pi} \frac{(1-r^2)^{\alpha+1}}{|1-re^{-i\theta}w|^{\alpha+2}} \frac{d\theta}{2\pi} + (1-|w|^2)^\alpha \int_0^{2\pi} G(re^{i\theta},w) \frac{d\theta}{2\pi} \right)\nonumber\\
& = C_\alpha\left( (1-|w|^2)^{\alpha+1} I_r^1(w) + (1-|w|^2)^\alpha I_r^2(w) \right)\label{eq:L1-radial-est}
\end{align}
We will study these two terms separately in the following two lemmas.
The first integral will be found to be bounded by a constant and the second will be dominated by $(1-|w|^2)$ and both will tend to zero pointwise with respect to $w$.

\begin{lemma}
\label{l:Iwr1}
The function
$$I_r^1(w) = \int_0^{2\pi} \frac{(1-r^2)^{\alpha+1}}{|1-re^{-i\theta}w|^{\alpha+2}} \frac{d\theta}{2\pi}$$
is bounded by $1$ and satisfies the pointwise limit
$$\text{$I_r^1(w)\to 0$ when $r\upto 1$ for fixed $w\in\D$.}$$
\end{lemma}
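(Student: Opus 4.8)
The plan is to strip off the argument of $w$ first and then handle the two claims separately. Writing $w=|w|e^{i\psi}$ and substituting $\theta\mapsto\theta+\psi$ leaves the full-period integral unchanged and turns $|1-re^{-i\theta}w|$ into $|1-r|w|e^{-i\theta}|$, so $I_r^1(w)$ depends only on $r$ and $|w|$; I may therefore assume $w=|w|\in[0,1)$. The pointwise limit is then the easy half: for fixed $w$ the denominator is bounded below uniformly in $\theta$ by the reverse triangle inequality, $|1-re^{-i\theta}w|\geq 1-r|w|\geq 1-|w|>0$, so that
$$I_r^1(w)\leq\frac{(1-r^2)^{\alpha+1}}{(1-|w|)^{\alpha+2}},$$
and the right-hand side tends to $0$ as $r\upto 1$ because $\alpha+1>0$ and $w$ is held fixed.

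For the boundedness I would invoke the standard circular integral estimate
$$\int_0^{2\pi}\frac{1}{|1-\rho e^{i\theta}|^{c}}\,\frac{d\theta}{2\pi}\leq\frac{C_c}{(1-\rho)^{c-1}},\qquad 0\leq\rho<1,\ c>1,$$
a Forelli--Rudin type bound, applied with $\rho=r|w|$ and $c=\alpha+2>1$. This yields $I_r^1(w)\leq C_\alpha\bigl((1-r^2)/(1-r|w|)\bigr)^{\alpha+1}$, and the quotient is controlled exactly as in (\ref{eq:reverse}): since $|w|\leq 1$ we have $1-r|w|\geq 1-r$, whence $(1-r^2)/(1-r|w|)\leq 1+r<2$. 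Thus $I_r^1(w)\leq 2^{\alpha+1}C_\alpha$ uniformly in $r$ and $w$, which is all that the subsequent proposition and its dominated-convergence argument actually use.

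The genuine obstacle is the sharp constant $1$. For $\alpha=0$ it is free: the integrand equals $\frac{1-r^2}{1-r^2|w|^2}$ times the classical Poisson kernel $P(r|w|,\cdot)$, whose mean over $\T$ is $1$, so $I_r^1(w)=\frac{1-r^2}{1-r^2|w|^2}\leq 1$ exactly. For general $\alpha$ this mean-value structure is gone; expanding the kernel by Parseval gives
$$I_r^1(w)=(1-r^2)^{\alpha+1}\,{}_2F_1\left(\tfrac{\alpha+2}{2},\tfrac{\alpha+2}{2};1;r^2|w|^2\right),$$
and letting $|w|\to 1$ and then $r\upto 1$ the usual hypergeometric asymptotic produces the limiting value $\Gamma(\alpha+1)/\Gamma(\tfrac{\alpha+2}{2})^2$. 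This equals $1$ precisely at $\alpha=0$ and is strictly larger for every other $\alpha>-1$, which leads me to expect that the clean bound by the absolute constant $1$ is special to the unweighted case and that for general $\alpha$ the correct statement is boundedness by a constant $C_\alpha$. Pinning the constant down to $1$ is therefore the step I would scrutinize most carefully.
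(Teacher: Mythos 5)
Your treatment of the pointwise limit is exactly the paper's argument (reverse triangle inequality, then $\alpha+1>0$), so that half needs no comment. For the boundedness you take a genuinely different route: you invoke a Forelli--Rudin type estimate and conclude $I_r^1(w)\leq 2^{\alpha+1}C_\alpha$, whereas the paper writes $w=se^{i\theta_0}$, uses rotation invariance together with $1-r^2\leq 1-(rs)^2$ to dominate $I_r^1(w)$ by the integral mean $M_\alpha(rs)$ of the modulus of the $\alpha$-harmonic Poisson kernel, and then quotes Theorem~3.1 of \cite{o} as asserting $M_\alpha(\rho)\leq 1$. Both reductions are valid; the two proofs differ only in what is used to bound the resulting radial quantity.

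The important point is that your suspicion about the constant $1$ is justified, and your own hypergeometric computation already proves it, so you should state it as a fact rather than an expectation. By Parseval and Euler's transformation,
$$M_\alpha(\rho)=\int_0^{2\pi}\frac{(1-\rho^2)^{\alpha+1}}{|1-\rho e^{i\tau}|^{\alpha+2}}\frac{d\tau}{2\pi}=(1-\rho^2)^{\alpha+1}\,{}_2F_1\left(\tfrac{\alpha+2}{2},\tfrac{\alpha+2}{2};1;\rho^2\right)={}_2F_1\left(-\tfrac{\alpha}{2},-\tfrac{\alpha}{2};1;\rho^2\right),$$
which is a power series in $\rho^2$ whose coefficients are squares, hence nonnegative. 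Therefore $M_\alpha$ is nondecreasing, $M_\alpha(\rho)\geq M_\alpha(0)=1$, and by Gauss's summation theorem $M_\alpha(\rho)\upto\Gamma(\alpha+1)/\Gamma\left(\tfrac{\alpha+2}{2}\right)^2$ as $\rho\upto1$; by strict log-convexity of $\Gamma$ this limit exceeds $1$ for every $\alpha\neq 0$ with $\alpha>-1$ (for instance $M_2(\rho)=1+\rho^2$). Since $I_r^1(w)\to M_\alpha(r)$ as $|w|\upto 1$ with $r$ fixed, the bound $I_r^1(w)\leq 1$ genuinely fails for $\alpha\neq 0$: the constant $1$ in the lemma, and the paper's quotation of Theorem~3.1 of \cite{o}, cannot be right as stated (note also that the paper's displayed definition of $K_\alpha$ carries the exponent $\alpha+1$ where its use requires $\alpha+2$, a further sign of a transcription slip around this citation). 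So your proposal does not prove the statement as literally written, but only because that statement is false; what you do prove --- $I_r^1(w)\leq C_\alpha$, with the sharp choice $C_\alpha=\Gamma(\alpha+1)/\Gamma\left(\tfrac{\alpha+2}{2}\right)^2$ available from your computation, together with the pointwise limit --- is the correct version, and, as you observe, it is all that the dominated convergence argument and the later $L^1(\D)$ estimate actually use.
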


\begin{proof}
The pointwise limit follow from the inverse triangle inequality applied to the integrand:
$$\frac{(1-r^2)^{\alpha+1}}{|1-re^{-i\theta}w|^{\alpha+2}} \leq \frac{(1-r^2)^{\alpha+1}}{(1-|w|)^{\alpha+2}}$$
which tends to zero when $r\upto 1$ and $w$ is fixed in $\D$.
Integrating this inequality gives the desired pointwise limit.

To show the boundedness we will control the integrand with $L^1$-means of a kernel of the type
$$K_\alpha(z)=\frac{(1-|z|^2)^{\alpha+1}}{|1-z|^{\alpha+1}}$$
which can be considered as the absolute value of the $\alpha$-harmonic Poisson kernel which was found and studied in \cite{ow} by Olofsson and Wittsten.
The related kernel $K_\alpha$ was studied further in \cite{o} by Olofsson with different methods.
The mentioned control of the $L^1$-means of $K_\alpha$ is found in Theorem~3.1 in \cite{o} and says that the function
$$M_\alpha(r)=\int_0^{2\pi} K_\alpha(re^{i\theta}) \frac{d\theta}{2\pi}$$
is bounded by $1$ for $0\leq r < 1$.
The idea behind the proof in \cite{o} is to integrate a series expansion of $K_\alpha$ and identify it as a hypergeometric function which has well-known properties.

To adapt this to our case, set $w=se^{i\theta_0}$ so that by estimating and noting that we have rotation invariance we get
$$\int_0^{2\pi} \frac{(1-r^2)^{\alpha+1}}{|1-rse^{-i(\theta-\theta_0)}|^{\alpha+2}} \frac{d\theta}{2\pi} \leq \int_0^{2\pi} \frac{(1-(rs)^2)^{\alpha+1}}{|1-rse^{i\tau}|^{\alpha+2}} \frac{d\tau}{2\pi} = \int_0^{2\pi} K_\alpha(rs e^{i\tau}) \frac{d\tau}{2\pi} \leq 1$$
for any $0\leq r < 1$.
\end{proof}

\begin{lemma}
The function
$$I_r^2(w)=\int_0^{2\pi} G(re^{i\theta},w) \frac{d\theta}{2\pi}$$
defined for $0\leq r < 1$ satisfies the pointwise limit
$$\text{$I_r^2(w)\to 0$ when $r\upto 1$ for fixed $w\in\D$}$$
and the estimate
$$I_r^2(w) \leq C_{r_0} (1-|w|^2), \quad 0<r_0\leq r < 1.$$
\end{lemma}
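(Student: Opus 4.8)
The plan is to evaluate $I_r^2(w)$ in closed form, after which both assertions drop out. Write out the classical Green's function and split the integrand into
$$G(re^{i\theta},w)=-2\log|re^{i\theta}-w|+2\log|1-re^{-i\theta}w|,$$
so that the average decomposes into two pieces. For the second piece I would note that $z\mapsto\log|1-\c{z}w|^2=2\Re\log(1-z\c{w})$ is harmonic on $\c{\ball{0}{r}}$, since $|z\c{w}|\leq r|w|<1$ throughout this disc; the mean value property then forces its circular average over $|z|=r$ to equal its value at the centre, which is $\log|1-0|^2=0$. Thus the second piece contributes nothing.

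For the first piece I would invoke the standard logarithmic average (a special case of Jensen's formula applied to $f(z)=z-w$),
$$\int_0^{2\pi}\log|re^{i\theta}-w|\,\frac{d\theta}{2\pi}=\log\max(r,|w|),$$
the two regimes arising according to whether the single zero $z=w$ of $f$ lies inside $\ball{0}{r}$ or not. Combining the two computations gives the closed form
$$I_r^2(w)=-2\log\max(r,|w|).$$
The pointwise limit is then immediate: for fixed $w\in\D$ and all $r>|w|$ we have $I_r^2(w)=-2\log r$, which tends to $0$ as $r\upto 1$.

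For the estimate, set $m=\max(r,|w|)$, so that $m\geq r\geq r_0$ and $I_r^2(w)=-\log m^2$ with $m^2\in[r_0^2,1)$. The calculus inequality $-\log x\leq C_{r_0}(1-x)$ holds on $[r_0^2,1)$ because the quotient $(-\log x)/(1-x)$ is continuous there and tends to $1$ as $x\upto 1$, hence is bounded by some constant $C_{r_0}$. Applying this with $x=m^2$ and using $1-m^2\leq 1-|w|^2$ (which holds since $|w|\leq m$) yields
$$I_r^2(w)=-\log m^2\leq C_{r_0}(1-m^2)\leq C_{r_0}(1-|w|^2),$$
as required.

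The only delicate points, and the closest thing to an obstacle here, are the two averaging identities: one must check that the $\log|1-\c{z}w|^2$ term really is harmonic up to and including the circle $|z|=r$ so that the mean value property applies, and one must correctly separate the cases $|w|<r$ and $|w|\geq r$ in the logarithmic average. It is precisely this case split that produces the $\max$, and with it the dependence on $1-|w|^2$ that makes the domination argument in the surrounding proposition work; the remaining inequality is entirely elementary.
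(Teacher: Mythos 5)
Your proposal is correct and takes essentially the same route as the paper: both rest on computing the closed form $I_r^2(w)=-2\log\max(r,|w|)$ via the mean-value property (the paper cites this computation, while you carry it out by splitting off the harmonic term $\log|1-\c{z}w|^2$ and invoking the standard logarithmic average), and then deduce the limit and the estimate by elementary calculus with the quotient $(-\log x)/(1-x)$. The only cosmetic difference is that the paper bounds this quotient using its monotonicity in the two cases $r\leq|w|$ and $r>|w|$, whereas you handle both at once with $m=\max(r,|w|)$.
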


\begin{proof}
We proceed by explicitly calculating $I_r^2(w)$ as
\begin{equation}
\label{eq:Irw2}
I_r^2(w) = \begin{cases}
-\log|w|^2, & r\leq |w|, \\
-\log r^2, & r>|w|.
\end{cases}
\end{equation}
This follows from an easy application of the mean-value property (see Example 5.7 in \cite{st}).
Hence it is clear that for fixed $w\in\D$ we have $I_r^2(w)\to 0$ when $r\upto 1$.

To find the estimate we calculate for $0<r_0\leq r <1$:
$$\frac{I_r^2(w)}{1-|w|^2} = \begin{Bmatrix}
\displaystyle \frac{-\log|w|^2}{1-|w|^2}, & r\leq |w|\\
\displaystyle\frac{-\log r^2}{1-|w|^2},& r>|w|
\end{Bmatrix} \leq \begin{Bmatrix}
\displaystyle\frac{-\log|w|^2}{1-|w|^2}, & r\leq |w|\\
\displaystyle\frac{-\log r^2}{1-r^2},& r>|w|
\end{Bmatrix} \leq \frac{-\log r^2}{1-r^2} \leq \frac{-\log r_0^2}{1-r_0^2}$$
where the last two inequalities come from the fact that the function $\displaystyle \frac{-\log t}{1-t}$ is decreasing.
\end{proof}

These two lemmas prove the previous proposition, which in turn imply the desired boundary properties for Green's potential.

\subsection{Verifying $L_\alpha G_\alpha^\mu = \mu$}
Having found the radial $L^1(\T)$-means $I_r(w)$ above it is quite easy to show that Green's potential is integrable in $\D$:

\begin{proposition}
Green's potential $G_\alpha^\mu(z)$ belongs to $L^1(\D)$.
\end{proposition}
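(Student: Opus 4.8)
The plan is to bound the $L^1(\D)$-norm of $G_\alpha^\mu$ by reducing everything to the radial means $I_r(w)$ that were already analysed for the boundary-limit proposition. First I would apply the triangle inequality inside the defining integral and then interchange the order of integration. Since the integrand $|G_\alpha(z,w)|$ is nonnegative and measurable, Tonelli's theorem justifies this without any prior integrability hypothesis, giving
$$\int_\D |G_\alpha^\mu(z)|\, dA(z) \leq \int_\D \left( \int_\D |G_\alpha(z,w)|\, dA(z) \right) d|\mu|(w).$$
Thus it suffices to estimate the inner integral for fixed $w\in\D$.

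Next I would pass to polar coordinates. With the normalization $dA=\frac{1}{\pi}dx\,dy$ one has $dA = 2r\,dr\,\frac{d\theta}{2\pi}$, so the inner integral collapses precisely onto the radial means:
$$\int_\D |G_\alpha(z,w)|\, dA(z) = \int_0^1 2r\, I_r(w)\, dr.$$
Now I would invoke the decomposition (\ref{eq:L1-radial-est}), which is valid for every $0\leq r<1$, together with the two facts $I_r^1(w)\leq 1$ (Lemma~\ref{l:Iwr1}) and the explicit two-case formula (\ref{eq:Irw2}) for $I_r^2(w)$. The first contribution is immediate, $\int_0^1 2r\,I_r^1(w)\,dr \leq \int_0^1 2r\,dr = 1$. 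The second requires only a short elementary computation: splitting the $r$-integral at $r=|w|$ and integrating $-2r\log r$ by parts, one finds $\int_0^1 2r\,I_r^2(w)\,dr = 1-|w|^2$. Combining these yields
$$\int_\D |G_\alpha(z,w)|\, dA(z) \leq C_\alpha\left( (1-|w|^2)^{\alpha+1} + (1-|w|^2)^{\alpha}(1-|w|^2) \right) = C_\alpha (1-|w|^2)^{\alpha+1}.$$

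Finally I would integrate this bound against $d|\mu|$ and invoke the measure condition (\ref{eq:measure-condition}):
$$\int_\D |G_\alpha^\mu(z)|\, dA(z) \leq C_\alpha \int_\D (1-|w|^2)^{\alpha+1}\, d|\mu|(w) < \infty,$$
which is exactly the desired conclusion. The only genuinely delicate points are the justification of the interchange of integrals, handled cleanly by nonnegativity via Tonelli, and the bookkeeping in the explicit $r$-integration of $I_r^2(w)$; everything else is a direct reuse of the estimates assembled for the boundary-limit proposition. I would stress that no restriction $r\geq r_0$ is needed here, since both the decomposition (\ref{eq:L1-radial-est}) and the bounds on $I_r^1$ and $I_r^2$ hold on all of $[0,1)$, so the integration over $r$ near the origin causes no difficulty.
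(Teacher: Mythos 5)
Your proof is correct and follows essentially the same route as the paper: triangle inequality plus interchange of integration, reduction of the inner integral to the radial means $I_r(w)$, the decomposition (\ref{eq:L1-radial-est}) with the bound $I_r^1(w)\leq 1$ and the explicit computation $\int_0^1 2r\,I_r^2(w)\,dr=1-|w|^2$, and finally the measure condition (\ref{eq:measure-condition}). Your invocation of Tonelli rather than Fubini is in fact the cleaner justification for the interchange, since the integrand $|G_\alpha(z,w)|$ is nonnegative.
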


\begin{proof}
If we estimate using the triangle inequality and use Fubini's theorem we get
$$\|G_\alpha^\mu\|_{L^1(\D)} \leq \int_\D \left (\int_\D |G_\alpha(z,w)| dA(z) \right)  d|\mu|(w)$$
where the inner integral can be expressed in terms of the radial $L^1(\T)$-means $I_r(w)$ as
$$\int_\D |G_\alpha(z,w)| dA(z) = \int_0^1 I_r(w) 2r dr.$$
If we use the estimate (\ref{eq:L1-radial-est}) and the boundedness of $I_r^1(w)$ from Lemma~\ref{l:Iwr1} we get
$$\int_\D |G_\alpha(z,w)| dA(z) \leq C_\alpha \left( (1-|w|^2)^{\alpha+1} + (1-|w|^2)^\alpha \int_0^1 I_r^2(w) 2r dr \right).$$
The last integral here can be evaluated explicitly using (\ref{eq:Irw2}) to be
$$\int_0^1 I_r^2(w) 2r dr = -\log|w|^2 \int_0^{|w|} 2r dr + \int_{|w|}^1 2r \log r^2 dr =  1-|w|^2.$$
Hence we can conclude that
$$\int_\D |G_\alpha(z,w)| dA(z) \leq C_\alpha (1-|w|^2)^{\alpha+1} $$
which in turn implies that $\|G_\alpha^\mu\|_{L^1(\D)} < \infty$ when we integrate with respect to $|\mu|$.
\end{proof}

Knowing that $G_\alpha^\mu$ is integrable allows us to find the action of $L_\alpha G_\alpha^\mu$ as
$$\action{L_\alpha G_\alpha^\mu}{\phi} = \action{G_\alpha^\mu}{\c{L_\alpha}\phi} = \int_\D \left( \int_\D G_\alpha(z,w) d\mu(w) \right) \c{L_\alpha}\phi(z) dA(z).$$
From the proof of the previous proposition we can also deduce that $G_\alpha(z,w)\in L^1(dA\times d|\mu|)$ which justifies the use of Fubini's theorem.
So after changing the order of integration we can find the action by using Proposition~\ref{p:LG}:
\begin{align*}
\action{L_\alpha G_\alpha^\mu}{\phi} &= \int_\D \left( \int_\D G_\alpha(z,w)  \c{L_\alpha}\phi(z) dA(z) \right) d\mu(w)\\
&= \int_\D \action{L_\alpha G_\alpha(z,w)}{\phi(z)} d\mu(w) = \int_\D \phi(w) d\mu(w).
\end{align*}
Hence we conclude that $L_\alpha G_\alpha^\mu=\mu$ in the sense of distributions and we have shown that Green's potential solve Poisson's equation.

\subsection*{Acknowledgments}
I would like to thank Anders Olofsson for the useful discussions and references.

\bibliographystyle{plain}
\bibliography{green}

\end{document}